\documentclass[a4paper,11pt]{amsart}

\usepackage{amsmath,amssymb,amsthm}
\usepackage{hyperref}
\allowdisplaybreaks 

 \setlength{\parindent}{2em}
 \setlength{\parskip}{3pt plus1pt minus2pt}
 \setlength{\baselineskip}{20pt plus2pt minus1pt}
 \setlength{\textheight}{21.5 true cm}
 \setlength{\textwidth}{14.5true cm}
  \setlength{\headsep}{10truemm}
  \addtolength{\hoffset}{-12mm}

\numberwithin{equation}{section} \theoremstyle{plain}
\newtheorem{thm}{Theorem}[section]

\newtheorem{prop}[thm]{Proposition}
\newtheorem{lem}[thm]{Lemma}
\newtheorem{cor}[thm]{Corollary}
\newtheorem{defn}[thm]{Definition}
\newtheorem{conj}[thm]{Conjecture}

\newtheorem{rem}[thm]{Remark}

\newtheorem*{acknow}{Acknowledgments}
   
 \makeatletter

\newtheorem{Weak Chern Conjectures}[thm]{Weak Chern Conjectures}
\newtheorem*{Weak Chern Conjectures0}{Weak Chern Conjectures}
\newtheorem{Chern Conjecture}[thm]{Chern Conjectures}
\newtheorem*{Chern Conjecture0}{Chern Conjecture}

\def\<{\langle}
\def\>{\rangle}
\def\({\left(}
\def\){\right)}
\def\[{\left[}
\def\]{\right]}

\allowdisplaybreaks 
\makeatother

\title {Integral-Einstein  hypersurfaces in spheres}

\author[J.Q. Ge]{Jianquan Ge${}^{1}$}
\address{${}^{1}$School of Mathematical Sciences, Laboratory of Mathematics and Complex Systems, Beijing Normal University, Beijing 100875, P.R. CHINA.}
\email{jqge@bnu.edu.cn}

\author[F.G. Li]{Fagui Li${}^{2,*}$}
\address{$^{2}$Frontier Interdisciplinary Domain, Beijing Institute of Technology, Zhuhai, Guangdong 519088, P. R. CHINA.}
\email{lifagui@bitzh.edu.cn}

\subjclass[2010]{53C42, 53C24, 53C65.}
\date{}
\keywords{Minimal hypersurface, Einstein manifold, Isoparametric hypersurface, Chern Conjecture.}
\thanks {$^{*}$ the corresponding author.}
\thanks{J. Q. Ge is partially supported by NSFC (No. 12171037) and the Fundamental Research Funds for the Central Universities.}
\thanks{F. G. Li is partially supported by NSFC (No. 12171037, 12271040).}
\begin{document}
\maketitle

\begin{abstract}
Combining the intrinsic and extrinsic geometry, we generalize Einstein manifolds to Integral-Einstein (IE) submanifolds. A Takahashi-type theorem is established to characterize minimal hypersurfaces with constant scalar curvature (CSC) in unit spheres which are conjectured to be isoparametric in the Chern conjecture. For these hypersurfaces, we obtain some integral inequalities with the bounds characterizing exactly the totally geodesic hypersphere, the non-IE minimal Clifford torus $S^{1}(\sqrt{\frac{1}{n}})\times S^{n-1}(\sqrt{\frac{n-1}{n}})$ and the IE minimal CSC hypersurfaces. Moreover, if further the third mean curvature is constant, then it is an IE hypersurface or an isoparametric hypersurface with $g\leq2$ principal curvatures. In particular, all minimal isoparametric hypersurfaces with $g\geq3$ principal curvatures are IE hypersurfaces. As applications, we obtain some spherical Bernstein theorems, including that any embedded closed minimal surface of genus no more than $\mathfrak{g}$ inside a tubular neighborhood of constant radius $r(\mathfrak{g})>0$ around an equator in $\mathbb{S}^3$ is an equator.
\end{abstract}

\section{Introduction}
In 1969, Lawson \cite{Lawson 1969} gave a classification of minimal Einstein hypersurfaces in unit spheres, i.e., \emph{if $M^n\subset \mathbb{S}^{n+1}$ is Einstein, then either it is   totally geodesic, or $n =2k$ and it is an open submanifold of}
$$
M_{k,k}=
S^{k}(\sqrt{\frac{1}{2}})\times S^{k}(\sqrt{\frac{1}{2}})
\subset \mathbb{S}^{n+1}.
$$
Meanwhile, Ryan \cite{Ryan 1969} classified Einstein hypersurfaces in all space forms without  the minimal condition.  In particular, \emph{if $M^n\subset \mathbb{S}^{n+1}$ $\left(n\geq 3 \right) $ is a closed  Einstein hypersurfaces, then $M^n$ is either a totally umbilical hypersphere,  or  one of  $S^{k}(\sqrt{\frac{k-1}{n-2}})\times S^{n-k}(\sqrt{\frac{n-k-1}{n-2}})$,
$\left( 2 \leq k \leq n-2\right) $}.
These Einstein hypersurfaces only consist of isoparametric hypersurfaces with no more than $2$ principal curvatures (except $S^{1}(r)\times S^{n-1}(t)$,  $t=\sqrt{1-r^2}$ and $0<r<1$) in $\mathbb{S}^{n+1}$. Recall that isoparametric hypersurfaces in unit spheres are hypersurfaces which have $g\in\{1,2,3,4,6\}$ distinct constant principal curvatures. The classification problem was studied extensively, since late 1930s initiated by Cartan (for $g\leq3$) till to the year 2020 completed by Miyaoka \cite{Miy13} (for $g=6$) and finally by Cecil, Jenson and  Chi \cite{CCJ07, Chi11, Chi13} (for $g=4$) (see a number of important contributions in references of the excellent book \cite{CR15} and the elegant survey \cite{Chi19}). In this paper, by combining the intrinsic and extrinsic geometry we introduce an extension of Einstein hypersurfaces so as to include these fascinating hypersurfaces.

A Riemannian manifold $(M^n, \mathbf{g})$ $(n\geq3)$ is called Einstein if it satisfies the pointwise intrinsic condition (cf. \cite{Besse 1987}) $$ {\rm Ric}=\frac{R}{n} \mathbf{g},$$
where $\rm Ric$ is the Ricci curvature tensor, and $R$ is the scalar curvature which is constant by Schur's theorem.
By the famous Nash embedding theorem \cite{John Nash 1954}, $(M^n, \mathbf{g})$ is always realizable as a submanifold of a Euclidean space $\mathbb{R}^{N}$.
To relax the pointwise intrinsic condition of Einstein manifolds, we restrict the extrinsic geometry of the submanifold by taking an integral as follows.
\begin{defn}\label{defintion integral Eins manifold}
Let $M^n$ $(n\geq3)$ be a compact submanifold in the Euclidean space $\mathbb{R}^{N}$. We call $M^n$ an Integral-Einstein (IE) submanifold if for any unit vector $a\in \mathbb{S}^{N-1}$,
\begin{equation}\label{equation integral Einstein manifold}
\int_{M}\left( {\rm Ric}-\frac{R}{n}\mathbf{g}\right)
(a^{\rm T},a^{\rm T})=0,
\end{equation}
 where $a^{\rm T}\in\Gamma(TM)$ denotes the tangent component of the constant vector $a$ along $M^n$.
\end{defn}
For noncompact submanifolds, one can also define the IE property by requiring the integral equation (\ref{equation integral Einstein manifold}) over any geodesic ball $B_R(p)$ of sufficiently large radius $R$,
or over certain exhausting compact domains (which might be useful for those noncompact manifolds as total spaces of vector bundles).

 It is a natural problem whether there is a Nash-type embedding theorem for IE submanifolds, i.e., can any Riemannian manifold be embedded as an IE submanifold in a Euclidean space $\mathbb{R}^{N}$? More discreetly, one should allow the embedding to be IE up to some ambient transformations like Lie sphere transformations (cf. \cite{Ce08}) which include spherical parallel translations for submanifolds of spheres. It turns out that all isoparametric hypersurfaces of $\mathbb{S}^{n+1}$ except $S^{1}(r)\times S^{n-1}(\sqrt{1-r^2})$ are IE hypersurfaces (up to spherical parallel translations), including those non-Einstein minimal isoparametric hypersurfaces with $g\geq3$ principal curvatures (see Corollary \ref{cor-f3-isop} and Theorem \ref{thm Volume estimation of minimal isoparametric hypersurfaces}). The only left case $S^{1}(r)\times S^{n-1}(\sqrt{1-r^2})$ provides a candidate of counterexample to the IE embedding problem above, since we do not know wether it can be embedded as a higher codimensional IE submanifold.

Another motivation comes from the study of the Chern Conjecture \cite{SWY12}:
\begin{Chern Conjecture0}\cite{SWY12}
A closed minimal hypersurface $M^n$ with constant scalar curvature (CSC) in $\mathbb{S}^{n+1}$ is isoparametric.
\end{Chern Conjecture0}
 By the Simons inequality and the pinching rigidity \cite{CDK70, Lawson 1969,Simons68}, $M^n$ is either totally geodesic or a Clifford torus $S^{k}(\sqrt{\frac{k}{n}})\times S^{n-k}(\sqrt{\frac{n-k}{n}})$, which are isoparametric with $g\leq2$, if the constant squared length of the second fundamental form $S:=\|A\|^2\leq n$. Hence, only the case of $S>n$ is left to be verified as minimal isoparametric hypersurfaces with $g\geq3$ principal curvatures which have constant $S=(g-1)n>n$. The first nontrivial case when $n=3$ was proven by Chang \cite{Chang93}, while for higher dimensions it is still open in general (see various partial results in \cite{DX11,PT2,SWY12}, and see a recent important progress in \cite{TWY18, TY20} which generalized the $3$-dimensional result of \cite{AB90} to all dimensions).

 During the study of such minimal CSC hypersurfaces, we find that the following position and normal position height functions $\varphi_a(x), \psi_a (x)$ take important roles as in Minkowski's integral formula (cf. \cite{Reilly 1970}). For any unit vector $a \in \mathbb{S}^{n+1}$,  the height functions are defined as
\begin{equation}\label{height functions}
\varphi_a(x) = \langle x,a \rangle,
\quad  \psi_a (x)=\langle \nu,a \rangle,
\end{equation}
where $\nu$ is the unit normal vector field along $x\in M^n$.
There are many applications  of minimal submanifolds in spheres by using the height functions recently, such as Solomon-Yau's conjecture 
 \cite{Ge Li 2022 Volume gap}, Perdomo's conjecture \cite{Ge Li 2020 A lower bound second fundamental form} and  isoperimetric-type inequality \cite{Li-Chen isoperimetric inequality}.
The well known Takahashi theorem \cite{Takahashi 1966} states that \emph{$M^n$ is minimal if and only if there exists a constant $\lambda$ such that $\Delta \varphi_a =-\lambda\varphi_a$
for all $a \in \mathbb{S}^{n+1}$}. Analogously, we find that the same equation for $\psi_a$ is a sufficient and necessary condition for minimal CSC hypersurfaces (see Theorem \ref{thm the equivalent of minimal  CSC CMC hypersurfaces}). Similar characterization for constant mean curvature is also obtained. These lead us to study the uniform bounds of the $L^2$ squared norm of the position height function $\varphi_a$ on minimal CSC hypersurfaces. It turns out that the bounds characterize exactly the totally geodesic hypersphere, the non-IE minimal Clifford torus $S^{1}(\sqrt{\frac{1}{n}})\times S^{n-1}(\sqrt{\frac{n-1}{n}})$ and the IE minimal CSC hypersurfaces (see Theorem \ref{thm main Volume estimation of minimal hypersurface}). Here the Reilly formula (\cite{Reilly 1977}) is applied and then an alternative characterization of IE hypersurfaces follows (see Theorem \ref{thm-IE-hypers-charact}), which shows the non-IE property of $S^{1}(r)\times S^{n-1}(\sqrt{1-r^2})\subset\mathbb{S}^{n+1}$ and the IE property of minimal isoparametric hypersurfaces with $g\geq3$ principal curvatures. Moreover, if $ M^n$ is a closed minimal CSC hypersurface in $\mathbb{S}^{n+1}$  with $S=\|A\|^2>n$ and constant $f_3={\rm Tr}(A^3)$, then $ M^n$ is an IE hypersurface (see Corollary \ref{cor-f3-isop}). Here the Cheng-Yau operator \cite{Cheng Yau Math Ann 1977} is applied which greatly simplifies the proof for isoparametric hypersurfaces in Theorem \ref{thm Volume estimation of minimal isoparametric hypersurfaces}. As applications of the integral inequalities about $\varphi_a$, we also obtain some spherical Bernstein theorems (see Theorem \ref{thm  applitions of intergal formula}). Specifically, we demonstrate that the non-totally geodesic minimal hypersurfaces  cannot curl up near an equator.
In particular,  if $n=2$,  the distance from the equator is only related to the  Euler characteristic $\chi$ of  the minimal surface $M^2$. Hence any embedded closed minimal surface of genus no more than $\mathfrak{g}$ inside a tubular neighborhood of constant radius $r(\mathfrak{g})>0$ around an equator in $\mathbb{S}^3$ is an equator. This can be compared to the result that there are infinitely many minimal surfaces in some neighborhood of an equator (cf. \cite{Kapouleas and Yang 2010,Wiygul  David 2020}).

\section{Main results}\label{sec-main-results}
Firstly, we give a generalization of the classical Takahashi theorem \cite{Takahashi 1966} (case (i) and $H=0$ in
Theorem \ref{thm the equivalent of minimal  CSC CMC hypersurfaces}) for hypersurfaces in unit spheres.
\begin{thm}\label{thm the equivalent of minimal  CSC CMC hypersurfaces}
Let $M^n$ be  a  connected hypersurface immersed in $\mathbb{S}^{n+1}$ with mean curvature $H:={\rm Tr}(A)/n$ and
squared length of the second fundamental form $S:=\|A\|^2$.
\begin{itemize}
\item [(i)]  $H$ is constant
if and only if
there exist some continuous function $\lambda$ and constant $\mu$ such that for all $a \in \mathbb{S}^{n+1}$,
$$
\Delta \varphi_a =-\lambda\varphi_a +n\mu\psi_a,
$$
in which case $\lambda=n$ and $\mu=H$. 
In particular, $H=0$
if and only if
there exists a continuous function $\lambda$ such that $\Delta \varphi_a =-\lambda\varphi_a$
for all $a \in \mathbb{S}^{n+1}$.
\item [(ii)]  $H$ is constant
if and only if
there exist some continuous functions $\lambda$ and $\mu$ such that for all $a \in \mathbb{S}^{n+1}$,
$$
\Delta \psi_a =-\lambda\psi_a +n\mu\varphi_a,
$$
in which case $\lambda=S$ and $\mu=H$. 
In particular, $H=0$
if and only if
there exists a continous function  $\lambda$  such that $\Delta \psi_a =-\lambda\psi_a$ for all $a \in \mathbb{S}^{n+1}$.
\item [(iii)]  $H$ and $S$ are both constant
if and only if
there exist some constant $\lambda$ and  continuous function $\mu$ such that for all $a \in \mathbb{S}^{n+1}$,
$$
\Delta \psi_a =-\lambda\psi_a +n\mu\varphi_a,
$$
in which case $\lambda=S$ and $\mu=H$. 
In particular, $H=0$ and $S={\rm Constant}$
if and only if
there exists a constant $\lambda$  such that
$\Delta \psi_a =-\lambda\psi_a$
for all $a \in \mathbb{S}^{n+1}$.
\end{itemize}
\end{thm}

Next we give a characterization of IE hypersurfaces in unit spheres.
\begin{thm}\label{thm-IE-hypers-charact}
Let $M^n$ be a closed  hypersurface immersed in $\mathbb{S}^{n+1}$. Then \emph{$M^n$} is IE if and only if for all  $a \in \mathbb{S}^{n+1}$,
\begin{equation}\label{equation integral Einstein  hypersurface}
\int_{M}\left( 1-(n+1)\varphi_a^2-\psi_a^2\right)  =
\int_{M}\left( \rho-1\right)\left( 1-\varphi_a^2-(n+1)\psi_a^2\right),
\end{equation}
where $\rho-1=\frac{n^2H^2-S}{n(n-1)}$ and $\rho=\frac{R}{n(n-1)}$ is the normalized scalar curvature.
In particular, we have  the following special cases.
\begin{itemize}
\item[(A)] If $M^n$ is minimal, then \emph{$M^n$} is IE if and only if
$$
 \int_{M}S
\left(  1-\varphi_a^2-(n+1)\psi_a^2\right) =0, \quad \textit{for all } a \in \mathbb{S}^{n+1}.
$$
\item[(B)]
If $M^n$ is minimal and $S>0$ is constant, then we have
\begin{equation}\label{IEintegral-MCSC}
\int_{M}\left( {\rm Ric}-\frac{R}{n}\mathbf{g}\right)
(a^{\rm T},a^{\rm T})=S\Big((n+2) \int_{M}{\varphi_a^2}- {\rm Vol}(M^n)\Big).
\end{equation}
In this case, \emph{$M^n$} is IE if and only if any one of the follows holds:
\begin{itemize}
\item[(i)]
\begin{equation}\label{equation integral Einstein  hypersurface H=0 S=constant}
\int_{M}\varphi_a^2=\frac{1}{n+2}{\rm Vol }(M^n), \quad \textit{for all } a \in \mathbb{S}^{n+1};
\end{equation}
\item[(ii)]
$$
\int_{M}\psi_a^2=\frac{1}{n+2}{\rm Vol }(M^n), \quad \textit{for all } a \in \mathbb{S}^{n+1};
$$
\item[(iii)]
$$
\int_{M}\varphi_a^2=\int_{M}\psi_a^2, \quad \textit{for all } a \in \mathbb{S}^{n+1};
$$
\item[(iv)]
$$
\int_{M}\varphi_a \psi_a f_3=0, \quad \textit{for all } a \in \mathbb{S}^{n+1},
$$
 where $f_3={\rm Tr}(A^3)=3\binom{n}{3}H_3$ and $H_3$ is the third mean curvature.
\end{itemize}
\end{itemize}
\end{thm}

\begin{rem}
For $n=2$, the equation $(\ref{equation integral Einstein manifold})$ for the definition of IE submanifolds is automatically satisfied and so is the integral formula $(\ref{equation integral Einstein  hypersurface})$, which is nontrivial and new to our best knowledge. Notice that $(\ref{equation integral Einstein  hypersurface})$ can be rewritten as
\begin{equation*}
\int_{M}\left( I_{n+2}-(n+1)xx^t-\nu\nu^t\right) =
\int_{M}\left( \rho-1\right) \left( I_{n+2}-xx^t-(n+1)\nu\nu^t\right),
\end{equation*}
where $I_{n+2}$ is the identity matrix, $xx^t$ and $\nu\nu^t$ are regarded as matrix-valued functions.
\end{rem}

\begin{cor}\label{cor-f3-isop}
A closed minimal CSC hypersurface in $\mathbb{S}^{n+1}$  with $S>n$ and constant third mean curvature is an IE hypersurface.
In particular, every minimal isoparametric hypersurface with $g\geq3$ principal curvatures in $\mathbb{S}^{n+1}$ is an IE hypersurface. Moreover, the Clifford torus $S^{1}(r)\times S^{n-1}(\sqrt{1-r^2})\subset\mathbb{S}^{n+1}$ $(0<r<1)$ is not IE.
\end{cor}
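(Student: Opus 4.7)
The plan is to reduce all three claims to Theorem \ref{thm-IE-hypers-charact}. For the first two, I will use part (B)(iv); for the Clifford torus, I will substitute explicit integrals into the full characterisation (\ref{equation integral Einstein  hypersurface}).

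For the first assertion, invoking Theorem \ref{thm the equivalent of minimal  CSC CMC hypersurfaces}(i) with $H=0$ gives $\Delta\varphi_a = -n\varphi_a$, and (iii) with $H=0$ and $S$ constant gives $\Delta\psi_a = -S\psi_a$ for every $a \in \mathbb{S}^{n+1}$. Since $S > n$, the two eigenvalues are distinct, so Green's identity immediately yields $\int_M \varphi_a\psi_a = 0$. Constancy of $f_3$ then upgrades this to $\int_M \varphi_a\psi_a f_3 = 0$, and Theorem \ref{thm-IE-hypers-charact}(B)(iv) concludes that $M^n$ is IE. The isoparametric case with $g \geq 3$ follows at once: constancy of the principal curvatures makes $S = (g-1)n > n$ and $f_3 = \sum_i\lambda_i^3$ both constants, so the first assertion applies.

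For the non-IE Clifford torus, let $M = S^1(r) \times S^{n-1}(R) \subset \mathbb{S}^{n+1}$ with $R = \sqrt{1 - r^2}$ and $n \geq 3$, and I would verify that (\ref{equation integral Einstein  hypersurface}) fails. The principal curvatures $R/r$ (multiplicity $1$) and $-r/R$ (multiplicity $n-1$) are constant, so
\[
\rho - 1 = \frac{n^2 H^2 - S}{n(n-1)} = \frac{nr^2 - 2}{nR^2}
\]
is constant as well. The product structure together with the rotational-symmetry identity $\int_{S^{m-1}(\varrho)}\langle y, a\rangle^2 = (|a|^2\varrho^2/m)\,\mathrm{Vol}(S^{m-1}(\varrho))$ yield, for $a = (a_1, a_2) \in \mathbb{R}^2 \times \mathbb{R}^n$,
\[
\int_M \varphi_a^2 = \mathrm{Vol}(M)\!\left(\frac{r^2|a_1|^2}{2} + \frac{R^2|a_2|^2}{n}\right),\quad \int_M \psi_a^2 = \mathrm{Vol}(M)\!\left(\frac{R^2|a_1|^2}{2} + \frac{r^2|a_2|^2}{n}\right).
\]
Specialising to $a = (a_1, 0)$ with $|a_1| = 1$ reduces (\ref{equation integral Einstein  hypersurface}) to the candidate identity $1 - nr^2 = (\rho-1)(1 - nR^2)$; substituting the formula for $\rho-1$ and using $r^2 + R^2 = 1$, the two sides differ by $(2-n)/(nR^2)$, a value independent of $r$ that is nonzero for $n \geq 3$. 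Hence $M$ is not IE.

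The main obstacle I anticipate is the Clifford torus computation: fixing the sign convention for $\nu$ so that the expression for $\psi_a$ comes out correctly, and then algebraically collapsing the resulting identity via $r^2 + R^2 = 1$ to expose the universal discrepancy $(2-n)/(nR^2)$. The remaining steps are short invocations of the theorems already established.
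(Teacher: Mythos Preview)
Your proposal is correct and follows essentially the same route as the paper: both use the eigenfunction orthogonality of $\varphi_a$ and $\psi_a$ (eigenvalues $n\neq S$) together with constancy of $f_3$ to invoke Theorem~\ref{thm-IE-hypers-charact}(B)(iv), and both dispose of the Clifford torus by plugging the explicit integrals of $\varphi_a^2,\psi_a^2$ into (\ref{equation integral Einstein  hypersurface}). The only difference is cosmetic: the paper records the two sides of (\ref{equation integral Einstein  hypersurface}) for a general $a=(a_1,a_2)$ and leaves the final contradiction to the reader, whereas you specialise immediately to $a=(a_1,0)$ and extract the explicit discrepancy $(2-n)/(nR^2)$, which is a cleaner way to close the argument.
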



\begin{rem}
Both of IE minimal CSC hypersurfaces and minimal isoparametric hypersurfaces with $g\geq3$ share the same average-symmetric property $(\ref{equation integral Einstein  hypersurface H=0 S=constant})$, namely, the $L^2$ squared norm of any coordinate function equals the average $\frac{1}{n+2}{\rm Vol }(M^n)$.
\end{rem}
It is natural to ask the following Weak Chern Conjectures:
\begin{conj}[Weak Chern Conjectures]\label{Weak Chern Conjectureskey}
\ 
\begin{itemize}
\item[(i)]  
 A closed minimal CSC hypersurface in $\mathbb{S}^{n+1}$ with $S > n$ is an IE hypersurface.
\item[(ii)]    A closed minimal CSC IE hypersurface in $\mathbb{S}^{n+1}$ with $S > n$ is isoparametric.
\end{itemize}
\end{conj}
By Corollary \ref{cor-f3-isop}, we  obtain the following equivalence relationship between the Chern Conjecture and the Weak Chern Conjectures.
\begin{prop}
The validity of the Chern Conjecture  is equivalent to the simultaneous validity of Conjectures \ref{Weak Chern Conjectureskey} (i)-(ii).
\end{prop}

The following inequalities imply a sharp gap as the Simons inequality mentioned before for minimal CSC hypersurfaces in $\mathbb{S}^{n+1}$. In particular, the equality cases characterize exactly the totally geodesic hypersphere, the IE minimal CSC hypersurfaces and the non-IE minimal Clifford torus $S^{1}(\sqrt{\frac{1}{n}})\times S^{n-1}(\sqrt{\frac{n-1}{n}})$ (see other characterizations of this Clifford torus in \cite{Cheng Qing-Ming 1996}, etc).
\begin{thm}\label{thm main Volume estimation of minimal hypersurface}
Let $ M^n$ be a closed minimal hypersurface  immersed  in $\mathbb{S}^{n+1}$. Then
\begin{equation}\label{equation  main Volume estimation of minimal hypersurface}
0\leq
\inf_{a\in \mathbb{S}^{n+1}}
\frac {\int_{M} \varphi^2_a}
{  {\rm Vol }\left( M^n \right)  }
\leq\frac{1}{n+2}
\leq\sup_{a\in \mathbb{S}^{n+1}}
\frac{\int_{M} \varphi^2_a}
{ {\rm Vol }\left( M^n \right)  }\leq
\frac{1}{n+1}.
\end{equation}
\begin{itemize}
\item[(i)]  The first or last equality holds if and only if $M^n$ is totally geodesic.
\item[(ii)] In the case of minimal CSC hypersurfaces, the second or third equality holds if and only if $M^n$ is an IE, non-totally geodesic, minimal CSC hypersurface.
Moreover, if $S>0$, i.e., $M^n$ is non-totally geodesic, then
\begin{equation}\label{ineq-MCSC-2n}
\frac{1}{2n}\leq \inf_{a\in \mathbb{S}^{n+1}}
\frac {\int_{M} \varphi^2_a}
{  {\rm Vol }\left( M^n \right)  },
\end{equation}
where the equality holds if and only if $M^n$ is $S^{1}(\sqrt{\frac{1}{n}})\times S^{n-1}(\sqrt{\frac{n-1}{n}})$.
\end{itemize}
\end{thm}

In fact, the left three inequalities of (\ref{equation  main Volume estimation of minimal hypersurface}) still hold without the minimal condition. Without the condition  of  constant scalar curvature in case $({\rm ii})$ of Theorem \ref{thm main Volume estimation of minimal hypersurface}, we also have the following Simons-type gap.
\begin{thm}\label{thm introduction Volume estimation of minimal hypersurface for nonconstant S}
Let $M^n$ be a closed minimal hypersurface immersed  in $\mathbb{S}^{n+1}$. Then
  \begin{itemize}
  \item [(i)]
\begin{equation*}
  \frac{1}
{2n}\int_{M}S
\leq
\sup_{p\in M^n}S(p)
\inf_{a\in \mathbb{S}^{n+1}}\int_{M} \varphi^2_a.
\end{equation*}
The equality holds if and only if $ S\equiv 0$ or $n$, and thus $M^n$ is either totally geodesic or the minimal Clifford torus $S^{1}(\sqrt{\frac{1}{n}})\times S^{n-1}(\sqrt{\frac{n-1}{n}})$.
\item [(ii)]
\begin{equation*}
\frac{n}{4n^2-3n+1} 
\left( {\int_{M} }S\right) ^2
\leq \int_{M} S^2
\inf_{a\in \mathbb{S}^{n+1}}\int_{M} \varphi^2_a.
\end{equation*}
The equality holds if and only if $M^n$ is totally geodesic.
\end{itemize}
\end{thm}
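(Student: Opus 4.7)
The plan is to derive a single master integral inequality and then deduce (i) and (ii) by two different applications of Cauchy--Schwarz. Since $M$ is minimal, Theorem~\ref{thm the equivalent of minimal  CSC CMC hypersurfaces}(i)(ii) with $H=0$ give $\Delta\varphi_a=-n\varphi_a$ and $\Delta\psi_a=-S\psi_a$. The orthogonal decomposition $a=a^{\rm T}+\varphi_a x+\psi_a\nu$ yields $\nabla\varphi_a=a^{\rm T}$ and $\nabla\psi_a=-A(a^{\rm T})$, so Stokes' theorem produces the three identities
\[
(n+1)\int_M\varphi_a^2+\int_M\psi_a^2={\rm Vol}(M),\quad \int_M|a^{\rm T}|^2=n\int_M\varphi_a^2,\quad \int_M S\psi_a^2=\int_M|A(a^{\rm T})|^2.
\]
Because $\tr(A)=0$, Cauchy--Schwarz on $\lambda_1=-(\lambda_2+\cdots+\lambda_n)$ gives $\max_i\lambda_i^2\le\frac{n-1}{n}S$, hence pointwise $|A(a^{\rm T})|^2\le\frac{n-1}{n}S|a^{\rm T}|^2$. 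Integrating this, and using the pointwise identity $1=\varphi_a^2+\psi_a^2+|a^{\rm T}|^2$ to eliminate $\int_M S\psi_a^2$, produces the master inequality
\[
n\int_M S\;\le\;n\int_M S\varphi_a^2+(2n-1)\int_M S|a^{\rm T}|^2,\qquad\forall\,a\in\mathbb{S}^{n+1}.
\]

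For (i), I would bound $\int_M S\varphi_a^2\le\sup_M S\cdot\int_M\varphi_a^2$ and $\int_M S|a^{\rm T}|^2\le n\sup_M S\cdot\int_M\varphi_a^2$ in the master inequality, then divide by $n$ to obtain $\int_M S\le 2n\sup_M S\cdot\int_M\varphi_a^2$; taking the infimum over $a$ finishes (i). For (ii), I would instead apply Cauchy--Schwarz $\int_M Sf\le(\int_M S^2f)^{1/2}(\int_M f)^{1/2}$ separately to $f=\varphi_a^2$ and $f=|a^{\rm T}|^2$, and then a second Cauchy--Schwarz on the resulting two-term sum with weights $n$ and $(2n-1)\sqrt{n}$, which produces the coefficient $\sqrt{n^2+n(2n-1)^2}=\sqrt{n(4n^2-3n+1)}$; combined with $\int_M S^2\varphi_a^2+\int_M S^2|a^{\rm T}|^2=\int_M S^2(1-\psi_a^2)\le\int_M S^2$ this closes the estimate, and squaring and rearranging gives (ii).

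The main obstacle is the sharpness analysis. For (i), equality forces $S\equiv\sup_M S$ on the dense set $\{\varphi_a\ne 0\}$ (otherwise $M\subset a^{\perp}$ is totally geodesic), so $S$ is constant, while pointwise equality in the Kato bound pins the principal-curvature spectrum to multiplicities $(1,n-1)$ with $\lambda_1^2=\frac{n-1}{n}S$; either $S\equiv 0$ (totally geodesic) or the Simons--Chern--do~Carmo--Kobayashi rigidity for minimal CSC hypersurfaces forces $S=n$ and identifies $M$ as a Clifford torus $S^k\times S^{n-k}$, of which only $k=1$ realises the required multiplicities, giving $S^{1}(\sqrt{\tfrac{1}{n}})\times S^{n-1}(\sqrt{\tfrac{n-1}{n}})$. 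For (ii), equality additionally requires $\int_M S^2\psi_a^2=0$ together with $S$ being constant on each of $\{\varphi_a\ne 0\}$ and $\{a^{\rm T}\ne 0\}$ (from the two Cauchy--Schwarz steps); combined with pointwise Kato-equality these conditions force $a^{\rm T}\equiv 0$ wherever $S>0$, so $a=\varphi_a x$ there, making $x$ locally constant on $\{S>0\}$---impossible unless $\{S>0\}$ has measure zero, which proves $M$ totally geodesic.
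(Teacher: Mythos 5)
Your proposal is correct and follows essentially the same route as the paper: the same three integral identities (obtained from $\Delta\varphi_a=-n\varphi_a$, $\Delta\psi_a=-S\psi_a$ and Stokes), the same eigenvalue bound $\lambda_1^2\le\frac{n-1}{n}S$, and the same master inequality $\int_MS\le\int_MS\varphi_a^2+\frac{2n-1}{n}\int_MS|a^{\rm T}|^2$; your Cauchy--Schwarz bookkeeping for (ii) (two integral applications followed by a discrete one) is just a rearrangement of the paper's pointwise-then-integral version and yields the identical constant $\frac{n}{4n^2-3n+1}$. Two small points on the equality case of (i): invoking Simons--Chern--do Carmo--Kobayashi is not quite legitimate, since that rigidity presupposes $S\le n$, which you do not know a priori; what actually closes the argument (and is what the paper implicitly uses) is that constancy of $S$ together with the rigid spectrum $(\lambda_1,\lambda,\dots,\lambda)$ with $\lambda_1=(1-n)\lambda$ forces the principal curvatures to be constant, so $M$ is isoparametric with $g=2$ and multiplicities $(1,n-1)$, hence the stated Clifford torus. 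Also, the ``if'' direction of the equality statement requires verifying that $S^{1}(\sqrt{\tfrac1n})\times S^{n-1}(\sqrt{\tfrac{n-1}{n}})$ actually attains $\inf_a\int_M\varphi_a^2=\frac{1}{2n}\mathrm{Vol}(M)$; the paper does this by an explicit computation with $a=(a_1,0)$, and your write-up omits it.
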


Let ${\rm Index}(M^n)$ denote the index of minimal hypersurfaces $M^n \subset \mathbb{S}^{n+1}$, the number of negative eigenvalues associated with the Jacobi (second variation) operator.
\begin{cor}\label{cor n in 2-6 Volume estimation of minimal hypersurface for nonconstant S}
Let $M^n$ $(2\leq n\leq 6)$ be a closed, non-totally geodesic, embedded minimal hypersurface in $\mathbb{S}^{n+1}$. Then there is a positive constant $C$ depending on  ${\rm Vol}(M^n)$ and  ${\rm Index}(M^n)$ such that
\begin{equation*}
\inf_{a\in \mathbb{S}^{n+1}}\int_{M} \varphi^2_a
\geq C{\rm Vol}(M^n).
\end{equation*}
In particular, if $n=2$,  the positive constant $C$  depends only  on  Euler characteristic $\chi$ of $M^2$.
\end{cor}

Lastly, we apply these inequalities to give some spherical Bernstein theorems.

\begin{defn}\label{def spherical zone}
 For any $a\in \mathbb{S}^{n+1}$ and $0<t<1$, we define the spherical zone as
 $$\mathbb{S}_{zone}^{n+1}(t)=\left\lbrace  x \in \mathbb{S}^{n+1}: |\left \langle x,a \right\rangle |<t \right\rbrace. $$
\end{defn}
It is well known that a closed minimal hypersurface lying in a closed hemisphere is totally geodesic (see Proposition \ref{thm 1spherical cap}).
Similarly, we have the following results for spherical zones.
\begin{thm}\label{thm  applitions of intergal formula}
Let $M^n$  be a closed  minimal hypersurface immersed in $\mathbb{S}^{n+1}$.
\begin{itemize}
\item[(i)]
If $M^n$ is CSC and the image of $M^n$ lying in $\mathbb{S}_{zone}^{n+1}(\sqrt{\frac{1}{2n}})$ (or in $\mathbb{S}_{zone}^{n+1}(\sqrt{\frac{1}{n+2}})$ when $M^n$ is IE), then it  is totally geodesic.
\item[(ii)]   
If $M^n$ is non-totally geodesic, then the image of $M^n$
 can not lie in $\mathbb{S}_{zone}^{n+1}(\sqrt{r})$, where  $r=\max\{r_1,r_2\}$ and
 $$r_1=\frac{\int_{M}S}
{2n {\rm Vol }\left( M^n
\right)
\sup_{p\in M^n}S(p)},\ \
r_2=\frac{n}{4n^2-3n+1}
 \frac{
 \left( {\int_{M} }S\right) ^2}
{ {\rm Vol }\left( M^n \right) \int_{M} S^2}.$$
\item[(iii)]
If $M^n$ $(2\leq n\leq 6)$ is embedded and non-totally geodesic, then  there is a positive constant ${\theta}$ depending on  ${\rm Vol}(M^n)$ and  ${\rm Index}(M^n)$ such that  the image of $M^n$
 can not lie in $\mathbb{S}_{zone}^{n+1}(\theta)$.
 In particular, if $n=2$,  ${\theta}$ depends only  on  Euler characteristic $\chi$ of $M^2$.
\end{itemize}
\end{thm}

\section{Takahashi-type theorem and characterizations for IE hypersurfaces}
Firstly, we recall the basic properties of the height functions $\varphi_a$ and $\psi_a$ defined in (\ref{height functions}), most of which were already in literature (cf. \cite{Nomizu and Smyth 1969}).

Let $x:M^{n}\rightarrow\mathbb{S}^{n+1}\subset\mathbb{R}^{n+2}$ be a closed hypersurface immersed in the unit sphere.
Let $\nabla,$  $\widetilde{\nabla}$ and $D$ be the Levi-Civita connections  on $M^n$, $\mathbb{S}^{n+1}$ and $\mathbb{R}^{n+2}$, respectively.
 Observe that the gradients of the height functions  are given by
$$\nabla\varphi_{a}(x)=a^{\rm T}, \quad \nabla\psi_{a}(x)=-A(a^{\rm T}),$$
where $a^{\rm T}\in \Gamma(TM)$ denotes the tangent component of $a$ along $M^n$, and $A$ is the shape operator with respect to the
 unit normal vector field $\nu$, i.e., $A(X)=-\widetilde{\nabla}_X\nu$.

Clearly, we can decompose the unit vector $a\in \mathbb{S}^{n+1}$ as
\begin{equation}\label{dec-a}
a= a^{\rm T}+\varphi_{a}(x)x+ \psi_{a}(x)\nu, \quad |a^{\rm T }|^2+\varphi_a^2+\psi_a^2=1.
\end{equation}

Since $D\varphi_{a}=a$, one deduces that the Hessian is
$$\mathrm{Hess}^\nabla\varphi_{a}(X,Y)=\mathrm{Hess}^D\varphi_{a}(X,Y)+B(X,Y)\varphi_{a}=0+\langle B(X,Y),a\rangle$$
for $X,Y\in \Gamma(TM)$ (the superscripts denote the connections). Here $B$ is the second fundamental form of $M^n$ as a submanifold in $\mathbb{R}^{n+2}$.
That is, $D_XY=\nabla_XY+B(X,Y)$.
Observe $$B(X,Y)=\langle B(X,Y),x\rangle x+\langle B(X,Y),\nu\rangle\nu=-\langle X,Y\rangle x+\langle A X,Y\rangle\nu.$$ Thus
\begin{equation*}
\mathrm{Hess}^\nabla\varphi_{a}(X,Y)=-\varphi_{a}(x)\langle X,Y\rangle+\psi_{a}(x)\langle A X,Y\rangle,
\end{equation*}
which, regarding $\mathrm{Hess}^\nabla \varphi_{a}$ as a $(1,1)$-tensor, can be rewritten as
\begin{equation*}
\mathrm{Hess}^\nabla\varphi_{a}=-\varphi_{a}(x){\rm Id}+\psi_{a}(x)A.
\end{equation*}
Hence
\begin{equation*}
\Delta\varphi_{a}(x)=-n\varphi_{a}(x)+nH\psi_{a}(x),
\end{equation*}
where $H:={\rm Tr}(A)/n$ is the mean curvature.

On the other hand,
\begin{eqnarray*}
\mathrm{Hess}^\nabla\psi_{a}(X)&:=&\nabla_X\nabla\psi_{a}=-\nabla_X\big(A(a^{\rm T})\big)=-(\nabla_XA)(a^{\rm T})-A(\nabla_Xa^{\rm T})\\
&=&-(\nabla_XA)(a^{\rm T})-A(\nabla_X\nabla\varphi_{a})=-(\nabla_XA)(a^{\rm T})-A(\mathrm{Hess}^\nabla\varphi_{a}(X))\\
&=&-(\nabla_{a^{\rm T}}A)(X)+\varphi_{a}(x)A(X)-\psi_{a}(x)A^2(X).
\end{eqnarray*}
Here the last equality follows from the Codazzi equation $(\nabla_{Y}A)(X)=(\nabla_{X}A)(Y)$.
Again we rewrite the Hessian as  a $(1,1)$-tensor
\begin{eqnarray*}
\mathrm{Hess}^\nabla\psi_{a}=-\nabla_{a^{\rm T}}A+\varphi_{a}(x) A-\psi_{a}(x) A^2.
\end{eqnarray*}

Therefore
\begin{equation*}
\Delta\psi_{a}=-{\rm Tr}(\nabla_{a^{\rm T}}A)+nH\varphi_{a}(x)-\|A\|^2\psi_{a}(x)=-n\langle\nabla H,{a}\rangle+nH\varphi_{a}(x)-\|A\|^2\psi_{a}(x).
\end{equation*}

In conclusion, we have shown
\begin{prop}\label{prop funda}
For a hypersurface $x: M^n\looparrowright    \mathbb{S}^{n+1} \subset  \mathbb{R}^{n+2}$ with the height functions $\varphi_a$ and $\psi_a$ defined in $(\ref{height functions})$, we have
$$\begin{array}{lll}
\nabla \varphi_a=a^{\rm T},&
\nabla \psi_a =-Aa^{\rm T},\\
\Delta \varphi_a=-n\varphi_a+nH\psi_a ,&
\Delta \psi_a =-n\left\langle \nabla H, a \right\rangle +nH\varphi_a -\|A\|^2\psi_a, \\
{\rm Hess}^\nabla\varphi_a=-\varphi_a {\rm Id}+\psi_a A, &
{\rm Hess}^\nabla\psi_a =-\nabla _{a^{\rm T}}A+\varphi_a A-\psi_a A^2.
\end{array}$$
\end{prop}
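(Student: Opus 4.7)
The plan is to work in $\mathbb{R}^{n+2}$ and use the orthogonal decomposition $a = a^{\rm T}+\varphi_a(x)\,x+\psi_a(x)\,\nu$ coming from the fact that $x$ and $\nu$ are mutually orthogonal unit normals to $M^n$ in $\mathbb{R}^{n+2}$, so all six formulas reduce to tracking tangential and normal components as one differentiates. The computations split naturally into a $\varphi_a$-block and a $\psi_a$-block, and the second block feeds on the output of the first.

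First I would handle $\varphi_a=\langle x,a\rangle$. Its Euclidean gradient is the constant vector $a$, so tangential projection gives $\nabla\varphi_a=a^{\rm T}$. For the intrinsic Hessian I would use the Gauss formula $\mathrm{Hess}^\nabla\varphi_a(X,Y)=\mathrm{Hess}^D\varphi_a(X,Y)+\langle B(X,Y),a\rangle$, where $B$ is the second fundamental form of $M^n$ in $\mathbb{R}^{n+2}$. Since $\varphi_a$ is linear, $\mathrm{Hess}^D\varphi_a=0$, and the two-step decomposition $B(X,Y)=-\langle X,Y\rangle x+\langle AX,Y\rangle\nu$ (the $x$-component is the sphere's second fundamental form, the $\nu$-component is by definition of the shape operator) yields $\mathrm{Hess}^\nabla\varphi_a=-\varphi_a\,\mathrm{Id}+\psi_a A$. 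Taking trace gives $\Delta\varphi_a=-n\varphi_a+nH\psi_a$.

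Next I would do $\psi_a=\langle\nu,a\rangle$. Differentiating along $X\in TM$ with $\widetilde\nabla_X\nu=-AX$ gives $X(\psi_a)=-\langle AX,a\rangle=-\langle AX,a^{\rm T}\rangle$ because $AX$ is tangent, hence $\nabla\psi_a=-A\,a^{\rm T}$. For the Hessian I would differentiate this once more: $\mathrm{Hess}^\nabla\psi_a(X)=-(\nabla_XA)(a^{\rm T})-A(\nabla_Xa^{\rm T})$, substitute $\nabla_Xa^{\rm T}=\mathrm{Hess}^\nabla\varphi_a(X)=-\varphi_aX+\psi_aAX$ from the previous block, and use the Codazzi equation $(\nabla_XA)(a^{\rm T})=(\nabla_{a^{\rm T}}A)(X)$ to land on the stated form $-\nabla_{a^{\rm T}}A+\varphi_aA-\psi_aA^2$. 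Tracing and using $\mathrm{Tr}(\nabla_YA)=nY(H)$ (itself a consequence of Codazzi) converts $\mathrm{Tr}(\nabla_{a^{\rm T}}A)$ into $n\langle\nabla H,a^{\rm T}\rangle=n\langle\nabla H,a\rangle$ (the $x$- and $\nu$-components of $a$ drop out because $\nabla H\in TM$), giving $\Delta\psi_a=-n\langle\nabla H,a\rangle+nH\varphi_a-\|A\|^2\psi_a$.

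The only step with any substance is the symmetrization $(\nabla_XA)(a^{\rm T})=(\nabla_{a^{\rm T}}A)(X)$; this is where Codazzi is essential and where the argument would break in higher codimension. Everything else is bookkeeping of tangential versus normal components in the ambient Euclidean derivative, so the main risk is only sign errors in the repeated projections of $a$ along $x$ and $\nu$.
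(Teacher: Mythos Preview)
Your proposal is correct and follows essentially the same route as the paper: the decomposition $a=a^{\rm T}+\varphi_a x+\psi_a\nu$, the Gauss formula $\mathrm{Hess}^\nabla\varphi_a(X,Y)=\mathrm{Hess}^D\varphi_a(X,Y)+\langle B(X,Y),a\rangle$ with $B(X,Y)=-\langle X,Y\rangle x+\langle AX,Y\rangle\nu$, and then the chain $\nabla_X(A\,a^{\rm T})=(\nabla_XA)(a^{\rm T})+A(\mathrm{Hess}^\nabla\varphi_a(X))$ followed by Codazzi, are exactly the steps the paper uses. Your added remarks that $\mathrm{Tr}(\nabla_YA)=nY(H)$ and $\langle\nabla H,a^{\rm T}\rangle=\langle\nabla H,a\rangle$ are the only elaborations beyond the paper's exposition, and they are correct.
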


Now we are ready to prove the Takahashi-type Theorem.
\begin{proof}[\textbf{Proof of Theorem $\mathbf{\ref{thm the equivalent of minimal  CSC CMC hypersurfaces}}$}]
Case $({\rm i})$.
By Proposition \ref{prop funda}, one has
$$
\Delta \varphi_a=-n\varphi_a+nH\psi_a,
$$
which proves the necessity for $\lambda=n$ and $\mu=H$. Conversely, if
there exist some continuous function $\lambda$ and constant $\mu$ such that for all $a \in \mathbb{S}^{n+1}$,
$$
\Delta \varphi_a =-\lambda\varphi_a +n\mu\psi_a,
$$
then, combining the two equations above, we have
$$-nx+nH\nu=-\lambda x+n\mu\nu,$$
which shows $\lambda=n$ and $H=\mu$ are constant by the orthogonality of $x$ and $\nu$.

Case $({\rm ii})$.
By Proposition \ref{prop funda}, one has
$$
\Delta \psi_a =-n\left\langle \nabla H, a \right\rangle +nH\varphi_a-\|A\|^2\psi_a,
$$
which proves the necessity for $\lambda=\|A\|^2=S$, and $\mu=H$ is constant. Conversely, if
there exist some continuous functions $\lambda$ and $\mu$ such that for all $a \in \mathbb{S}^{n+1}$,
$$
\Delta \psi_a =-\lambda\psi_a +n\mu\varphi_a,
$$
then, combining the two equations above, we have
$$
-n\nabla H +nHx-\|A\|^2\nu=-\lambda\nu+
n\mu x,
$$
which shows $\nabla H=0$, $\lambda=\|A\|^2$, and $\mu=H$ is constant by the orthogonality of $\nabla H$, $x$ and $\nu$.
The same argument can prove the case $({\rm iii})$.
\end{proof}

Next we give the characterization of IE hypersurfaces in spheres.
\begin{proof}[\textbf{Proof of Theorem $\mathbf{\ref{thm-IE-hypers-charact}}$}]
Firstly, we recall Reilly's formula \cite{Reilly 1977}
\begin{equation}\label{equation Reilly's formular }
\int_{M}\Big(\left(\Delta f \right)^2-\|{\rm Hess}^\nabla f\|^2\Big)=
\int_{M}{\rm Ric}(\nabla f,\nabla f), \quad \textit{for any }f\in C^{\infty}(M).
\end{equation}
By  Proposition \ref{prop funda}, we have
\begin{equation*}
\left( \Delta \varphi_a\right) ^2=n^2\varphi_a^2+n^2H^2\psi_a^2-2n^2H\varphi_a\psi_a,
\end{equation*}
\begin{equation*}
\|{\rm Hess}^\nabla\varphi_a\|^2=n\varphi_a^2+\|A\|^2\psi_a^2-2nH\varphi_a\psi_a,
\end{equation*}
\begin{equation*}
\begin{aligned}
\frac{1}{2}\Delta  \varphi_a^2
&=\varphi_a\Delta  \varphi_a+
\left\langle \nabla \varphi_a,\nabla \varphi_a \right\rangle
=-n\varphi_a^2+nH\varphi_a\psi_a+|a^{\rm T }|^2\\
&=1-(n+1)\varphi_a^2-\psi_a^2+nH\varphi_a\psi_a,
\end{aligned}
\end{equation*}
where the last equality follows from (\ref{dec-a}) and it implies
\begin{equation}\label{IEleft}
\int_{M}\Big( 1 -(n+1)\varphi_a^2-\psi_a^2+nH\varphi_a\psi_a\Big) =0.
\end{equation}
Let $\rho=\frac{R}{n(n-1)}$ be the normalized scalar curvature. Then by the Gauss equation,
$$\rho-1=\frac{n^2H^2-\|A\|^2}{n(n-1)}.$$
Set $f(x)=\varphi_a(x)$ in (\ref{equation Reilly's formular }). By the preceding formulae and (\ref{dec-a}),  we calculate the IE integral (\ref{equation integral Einstein manifold}) as
\begin{eqnarray}\label{IEintegral}
&~&\int_{M}\left( {\rm Ric}-\frac{R}{n}\mathbf{g}\right)(a^{\rm T},a^{\rm T}) \\
&=&\int_{M}\left( \left(\Delta \varphi_a \right)^2-\|{\rm Hess}^\nabla\varphi_a\|^2-\frac{R}{n}\left\langle a^{\rm T},a^{\rm T}\right\rangle\right)  \nonumber\\
&=&\int_{M}\left( n^2\varphi_a^2+n^2H^2\psi_a^2-2n^2H\varphi_a\psi_a-\left(n\varphi_a^2+\|A\|^2\psi_a^2-2nH\varphi_a\psi_a\right) -\frac{R}{n}\|a^{\rm T}\|^2\right) \nonumber\\
&=&\int_{M}\Big(\left(n^2-1+(n-1)(\rho-1)\right) \varphi_a^2+\left( n-1+(n^2-1)(\rho-1)\right)\psi_a^2\Big) - \nonumber\\
&~&\ \ \ \ \int_{M}\left( 2n(n-1)H\varphi_a\psi_a+(n-1)\rho\right) \nonumber \\
&=&\left( n-1\right)\Big(\int_{M}\left( 1-(n+1)\varphi_a^2-\psi_a^2\right)-\int_{M}\left( \rho-1\right)\left( 1-\varphi_a^2-(n+1)\psi_a^2\right)\Big). \nonumber
\end{eqnarray}
This shows the equivalence between the IE equation (\ref{equation integral Einstein manifold}) and (\ref{equation integral Einstein  hypersurface}).

For the proof of case $(A)$, i.e., $H=0$, it follows from (\ref{IEleft}) that the left hand of (\ref{equation integral Einstein  hypersurface}) vanishes, namely,
\begin{equation}\label{IEleft-minimal}
\int_{M}\Big( 1 -(n+1)\varphi_a^2-\psi_a^2\Big) =0.
\end{equation}
Therefore, as $\rho-1=-S/(n(n-1))$, (\ref{equation integral Einstein  hypersurface}) is equivalent to $$
 \int_{M}S
\left(  1-\varphi_a^2-(n+1)\psi_a^2\right) =0.
$$

For the proof of case $(B)$, i.e., $H=0$ and $S=\|A\|^2\equiv \mathrm{Const}$, the formula (\ref{IEintegral-MCSC}) follows easily from (\ref{IEleft-minimal}) and (\ref{IEintegral}). Then the subcases $({\rm i}), ({\rm ii})$ and $({\rm iii})$ of case $(B)$ follow directly from (\ref{IEintegral-MCSC}) and (\ref{IEleft-minimal}).

The last subcase $({\rm iv})$ of case $(B)$ is intriguing but useful in deriving Corollary \ref{cor-f3-isop}. In the following
we give a simple proof by the self-adjoint operator of Cheng-Yau \cite{Cheng Yau Math Ann 1977}. We briefly recall the Cheng-Yau operator as follows.
For a $C^2$-function $f$ on $M^n$, the gradient $\nabla f=\sum_{i}f_i e_i$ and the Hessian ${\rm Hess}^\nabla f=\sum_{i,j} f_{ij}\omega_i\otimes \omega_j$ of $f$ under a local orthonormal frame $\{e_i\}_{i=1}^n$ can be computed by
$$
df=\sum_{i}f_i\omega_i,\ \
\sum_{j}f_{ij}\omega_j=df_i+\sum_{j}f_j\omega_{ji},
$$
where $\{\omega_i\}_{i=1}^n$ is the coframe and $\{\omega_{ji}\}$ are the connection forms.
The covariant derivative $\phi_{ijk}$ of a $2$-tensor $\phi_{ij}$ is defined by
$$
\sum_{k}\phi_{ijk}\omega_k=d\phi_{ij}+\sum_{k}\phi_{kj}\omega_{ki}+\sum_{k}\phi_{ik}\omega_{kj}.
$$
Let $\phi=\sum_{i,j} \phi_{ij}\omega_i\otimes \omega_j$ be a symmetric tensor on $M^n$. The Cheng-Yau operator associated to $\phi$ is defined by
$$
\square f=\sum_{i,j}\phi_{ij}f_{ij}=\langle \phi, {\rm Hess}^\nabla f \rangle.
$$
Then if $M^n$ is a closed manifold, by Stokes' theorem, for any $C^2$-function $u$ on $M^n$,
$$
 \begin{aligned}
\int_{M}\left( \square f\right) u
&=\int_{M}\sum_{i,j}\phi_{ij}f_{ij}u=
-\int_{M}\sum_{i,j}
\left( \phi_{ij}u\right) _jf_{i}\\
&=-\int_{M}\sum_{i,j}
\left( \phi_{ijj}u+\phi_{ij}u_{j}\right) f_{i}\\
&=\int_{M}\sum_{i,j}\phi_{ij}u_{ij}f
+\int_{M}\sum_{i,j}
\phi_{ijj}\left( fu_{i}-uf_{i}\right) \\
&=\int_{M}f \left( \square u\right)+\int_{M}\sum_{i,j}
\phi_{ijj}\left( fu_{i}-uf_{i}\right).
 \end{aligned}
$$
Thus,  the operator $\square$ is self-adjoint if and only if
$$
\sum_{j}\phi_{ijj}=0,
$$
for all $i$ (\cite{Cheng Yau Math Ann 1977}). Cheng-Yau provided two symmetric tensors satisfying the preceding condition, namely,
$$\phi_{ij}=\frac{R}{2}\delta_{ij}-{\rm Ric}_{ij}, \quad \textit{or} \quad  \phi_{ij}=({\rm Tr}\Psi)\delta_{ij}-\Psi_{ij},$$
where $\Psi$ is a symmetric Codazzi tensor. Now in our case $(B)$, $R$ is constant, ${\rm Tr}A=nH=0$ and the shape operator $A$ is Codazzi. Then both of the following two tensors
$$\phi_{ij}=\frac{R}{n}\delta_{ij}-{\rm Ric}_{ij}, \quad \textit{or} \quad \phi=A,$$
give rise to a self-adjoint Cheng-Yau operator, either of which can help to prove the subcase $({\rm iv})$ of case $(B)$. We proceed with the proof by the second for example.

By  Proposition \ref{prop funda}, we have
$$\begin{aligned}
&\int_M \psi_a (\square \varphi_a)=\int_M \psi_a  \langle A, -\varphi_a {\rm Id}+\psi_a A\rangle=\int_M \psi_a^2 S,\\
&\int_M \varphi_a (\square \psi_a)=\int_M \varphi_a \langle A, -\nabla _{a^{\rm T}}A+\varphi_a A-\psi_a A^2\rangle=\int_M (\varphi_a^2 S-\varphi_a\psi_a {\rm Tr} A^3),
 \end{aligned}$$
 which by the self-duality of the Cheng-Yau operator implies
 $$\int_M (\varphi_a^2 -\psi_a^2 )S=\int_M\varphi_a\psi_a {\rm Tr} A^3.$$
 The proof is completed by taking use of the subcase $({\rm iii})$.
\end{proof}

\begin{proof}[\textbf{Proof of Corollary $\mathbf{\ref{cor-f3-isop}}$}]
For  minimal CSC hypersurfaces in $\mathbb{S}^{n+1}$  with $S>n$, we know from Proposition \ref{prop funda} that $\varphi_a$ and $\psi_a$ are eigenfunctions of the Laplacian to the different eigenvalues $n$ and $S$ respectively. Therefore, they are orthogonal and thus the condition in the subcase $({\rm iv})$ of case $(B)$ of Theorem \ref{thm-IE-hypers-charact} is satisfied, namely, $$\int_M\varphi_a\psi_a {\rm Tr} A^3=({\rm Tr} A^3)\int_M\varphi_a\psi_a=0,$$
if further $M^n$ has constant third mean curvature (and thus constant ${\rm Tr} A^3$).

In particular, minimal isoparametric hypersurfaces with $g\geq3$ principal curvatures have constant mean curvatures of each order and have constant $S=(g-1)n>n$, thus they are IE hypersurfaces in unit spheres.

The Einstein isoparametric hypersurfaces (with $g=2$) $S^{k}(\sqrt{\frac{k-1}{n-2}})\times S^{n-k}(\sqrt{\frac{n-k-1}{n-2}})$ $\left( 2 \leq k \leq n-2\right)$ are automatically IE hypersurfaces in $\mathbb{S}^{n+1}$. However, the only left Clifford torus $M^n:=S^{1}(r_1)\times S^{n-1}(r_2)\subset\mathbb{S}^{n+1}$ $(0<r_1<1, r_1^2+r_2^2=1)$ is not an IE hypersurface in $\mathbb{S}^{n+1}$. The proof is a long but straightforward calculation of the integrals in both sides of (\ref{equation integral Einstein  hypersurface}). Here we leave the details to the reader and only give the final result of the calculation of the two sides of (\ref{equation integral Einstein  hypersurface}) as follows
$$\begin{aligned}
& {\rm LHS}  = \Big(1-(n+1)(r_1^2\frac{|a_1|^2}{2}+r_2^2\frac{|a_2|^2}{n})-(r_2^2\frac{|a_1|^2}{2}+r_1^2\frac{|a_2|^2}{n})\Big)V,\\
& {\rm RHS}=\frac{nr_1^2-2}{r_2^2n}\Big(1-(r_1^2\frac{|a_1|^2}{2}+r_2^2\frac{|a_2|^2}{n})-(n+1)(r_2^2\frac{|a_1|^2}{2}+r_1^2\frac{|a_2|^2}{n})\Big)V,
 \end{aligned}$$
 where $V={\rm Vol}(M^n)=r_1r_2^{n-1}{\rm Vol}(\mathbb{S}^1){\rm Vol}(\mathbb{S}^{n-1})$ and $a=(a_1,a_2)\in\mathbb{R}^2\oplus\mathbb{R}^n$. Direct calculations can show that the equation (\ref{equation integral Einstein  hypersurface}), i.e., $ {\rm LHS}  =  {\rm RHS}$ does not hold for all $a\in\mathbb{S}^{n+1}$. This completes the proof by Theorem \ref{thm-IE-hypers-charact}.
\end{proof}

\section{Integral inequalities with equalities by IE hypersurfaces}
In this section, based on the previous arguments, we estimate uniformly the $L^2$ squared norm of the position height function $\varphi_a$ of (\ref{height functions}) by further considering the height functions $\varphi_{a_j}$ with respect to an orthonormal frame  $\{a_j\}_{j=1}^{n+2}$ of $\mathbb{R}^{n+2}$.

\begin{proof}[\textbf{Proof of Theorem $\mathbf{\ref{thm main Volume estimation of minimal hypersurface}}$}]
The first inequality of (\ref{equation  main Volume estimation of minimal hypersurface}) is obvious and attains equality only at the totally geodesic hyperspheres $\{x\in\mathbb{S}^{n+1} : \varphi_a(x)=0\}$.

For the second and third inequality of (\ref{equation  main Volume estimation of minimal hypersurface}), we consider the height functions $\varphi_{a_j}$ with respect to an orthonormal frame $\{a_j\}_{j=1}^{n+2}$ of $\mathbb{R}^{n+2}$. It is easily seen that
$$\sum_{j=1}^{n+2}\varphi_{a_j}^2=1, \quad \sum_{j=1}^{n+2}\int_M\varphi_{a_j}^2={\rm Vol}(M^n),$$
which directly shows
$$(n+2)\inf_{a\in \mathbb{S}^{n+1}}
\int_{M} \varphi^2_a
\leq {\rm Vol }\left( M^n \right)\leq(n+2)\sup_{a\in \mathbb{S}^{n+1}}
\int_{M} \varphi^2_a.$$
In the case of minimal CSC hypersurfaces, the equalities above hold if and only if $(n+2)\int_{M} \varphi^2_a={\rm Vol}(M^n)$ for all $a\in\mathbb{S}^{n+1}$, i.e., $M^n$ is an IE (non-totally geodesic) minimal CSC hypersurface by the case $(B)$ of Theorem \ref{thm-IE-hypers-charact}.

The last inequality of (\ref{equation  main Volume estimation of minimal hypersurface}) follows easily from (\ref{IEleft-minimal}) (when $M^n$ is minimal), namely,
$$(n+1)\int_{M}\varphi_a^2=\int_{M}\Big( 1 -\psi_a^2\Big)\leq {\rm Vol}(M^n),$$
which attains equality if and only if $\psi_{a_0}\equiv 0$ for some $a_0\in\mathbb{S}^{n+1}$, i.e., the Gauss image of $M^n$ lies in an equator of $\mathbb{S}^{n+1}$, and in this case $M^n$ is embedded as an equator in $\mathbb{S}^{n+1}$ by a theorem of Nomizu and Smyth \cite{Nomizu and Smyth 1969}.

Now we come to prove the inequality (\ref{ineq-MCSC-2n}). When $M^n$ is minimal, by Proposition \ref{prop funda}, we have
$$\frac{1}{2}\Delta \psi_a^2=-S\psi_a^2+|Aa^{\rm T}|^2,$$
and thus
\begin{equation}\label{deltapsi2}
\int_M S\psi_a^2=\int_M |Aa^{\rm T}|^2.
\end{equation}
Let $\{\lambda_i\}_{i=1}^n$  be the eigenvalues of $A$ with $\lambda_1^2\geq\lambda_2^2\geq\cdots\geq\lambda_n^2$. Then we have $$\sum_{i=1}^{n}\lambda_i=0,\quad \sum_{i=1}^{n}\lambda_i^2=\|A\|^2=S.$$
Thus
$$
\begin{aligned}
0=\Big(\sum_{i=1}^{n}\lambda_i\Big)^2
&=\lambda_1^2+
2\lambda_1\sum_{i=2}^{n}\lambda_i+
\Big(\sum_{i=2}^{n}\lambda_i\Big)^2=-\lambda_1^2+\Big(\sum_{i=2}^{n}\lambda_i\Big)^2\\
&\leq-\lambda_1^2+(n-1)\sum_{i=2}^{n}\lambda_i^2=(n-1)S-n\lambda_1^2.
\end{aligned}
$$
Hence
\begin{equation}\label{eigen1est}
\lambda_1^2\leq\frac{n-1}{n}S,
\end{equation}
where the equality holds if and only if $ \lambda_1=(1-n)\lambda_2$ and $\lambda_2=\lambda_3=\dots=\lambda_n$.

It follows from (\ref{deltapsi2}) and (\ref{eigen1est}) that
\begin{equation}\label{equation intpsi1}
\int_M S\psi_a^2=\int_M |Aa^{\rm T}|^2 \leq
\int_{M}\lambda_1^2|a^{\rm T}|^2\leq
\frac{n-1}{n}\int_{M}S|a^{\rm T}|^2.
\end{equation}
On the other hand, by (\ref{IEleft-minimal}) and (\ref{dec-a}) we have
\begin{equation}\label{equa-aTnphi}
\int_{M}|a^{\rm T}|^2=n\int_{M}\varphi_a^2.
\end{equation}
Combining this with (\ref{equation intpsi1}) and (\ref{dec-a}), we obtain
 \begin{equation}\label{equintS}
\int_M S=\int_M S(\varphi_a^2+\psi_a^2+|a^{\rm T}|^2)\leq 2n \sup_{p\in M^n}S(p)\int_{M}\varphi_a^2,
\end{equation}
which proves the inequality (\ref{ineq-MCSC-2n}) if $S>0$ is constant. The equality of (\ref{equintS}) holds for some $a$ only if $S\equiv \mathrm{Const}$ and equalities hold in (\ref{eigen1est}, \ref{equation intpsi1}), which thus implies that $M^n$ is either totally geodesic or a minimal hypersurface with two constant distinct principal curvatures $\lambda_1$ and $\lambda_2$ of multiplicities $1$ and $n-1$ respectively. Hence, when $S>0$, $M^n$ is the minimal Clifford torus $M_{1,n-1}:=S^{1}(\sqrt{\frac{1}{n}})\times S^{n-1}(\sqrt{\frac{n-1}{n}})$. It is left to verify that there exists some $a$ such that the equality of (\ref{equintS}) holds on $M_{1,n-1}$, i.e.,
$${\rm Vol }\left( M_{1,n-1} \right) =2n\int_{M_{1,n-1}}\varphi_a^2.$$
Let $a=(a_1,a_2)$ with $\|a\|=1$, where $a_1\in \mathbb{R}^{2}$ and $a_2=0\in \mathbb{R}^{n}$. Write $x=(x_1,x_2)\in S^{1}(\sqrt{\frac{1}{n}})\times S^{n-1}(\sqrt{\frac{n-1}{n}})$. Then the verification can be done directly by
\begin{equation*}
\begin{aligned}
2n\int_{M_{1,n-1}}\varphi_a^2
&=2n\int_{M_{1,n-1}}\left\langle a,x    \right\rangle^2\\
&=2n\int_{S^{1}(\sqrt{\frac{1}{n}})\times S^{n-1}(\sqrt{\frac{n-1}{n}})}\left\langle a_1,x_1    \right\rangle^2\\
&=2n {\rm Vol }\left( S^{n-1}(\sqrt{\frac{n-1}{n}}) \right)
\int_{S^{1}(\sqrt{\frac{1}{n}})}\left\langle a_1,x_1    \right\rangle^2\\
&={\rm Vol }\left( S^{n-1}(\sqrt{\frac{n-1}{n}}) \right)
{\rm Vol }\left( S^{1}(\sqrt{\frac{1}{n}}) \right) \\
&= {\rm Vol }\left( M_{1,n-1} \right).
\end{aligned}
\end{equation*}
\end{proof}

\begin{proof}[\textbf{Proof of Theorem $\mathbf{\ref{thm introduction Volume estimation of minimal hypersurface for nonconstant S}}$}]
The first case has been proven previously in (\ref{equintS}).

For the second case, by (\ref{dec-a}, \ref{equation intpsi1}, \ref{equa-aTnphi}) and the Cauchy inequality, we have
$$
\begin{aligned}
\int_{M}S
&\leq
\int_{M}S\left(\varphi_a^2  +\frac{2n-1}{n} |a^{\rm T }|^2\right) \\
&\leq\int_{M}\Big(S^2\varphi^2_a+S^2|a^{\rm T }|^2\Big)^{\frac{1}{2}}
\left(\varphi^2_a+
\Big( \frac{2n-1}{n} \Big) ^2|a^{\rm T }|^2\right)^{\frac{1}{2}}\\
&\leq \int_{M} S\left(\varphi^2_a+
\Big( \frac{2n-1}{n}\Big) ^2|a^{\rm T }|^2 \right)^{\frac{1}{2}}\\
&\leq\left(\int_{M}S^2\right)^{\frac{1}{2}}
\left(\int_{M} \frac{4n^2-3n+1}{n}\varphi^2_a\right)^{\frac{1}{2}}.
\end{aligned}
$$
The equality holds if and only if $S\equiv0$, or $|a^{\rm T }|=\psi_a\equiv0$ which implies also $M^n$ is totally geodesic.
\end{proof}

To prove Corollary \ref{cor n in 2-6 Volume estimation of minimal hypersurface for nonconstant S}, we need the following lemmas.
\begin{lem}[Choi-Schoen \cite{H I Choi and R Schoen 1985}]
\label{lemma Choi-Schoen-Smax}
 Assume $N^{3}$ is a closed Riemannian manifold with positive Ricci curvature. If  $M^2$ is a compact embedded minimal surface of $N^{3}$, then  there exists a constant $C_E$ depending only on $N^{3}$  and  Euler characteristic $\chi$ of $M^2$ such that
$$
\sup_{p\in M^2}S(p)\leq C_E.
$$
\end{lem}

\begin{lem}[Sharp \cite{Sharp Ben 2017}]
\label{lemma  Sharp-Smax}
 Assume $N^{n+1}$ $(2 \leq n \leq 6)$ is a closed Riemannian manifold with positive Ricci curvature. If  $M^n$ is a compact embedded minimal hypersurface of $N^{n+1}$, then  there exists a constant $C_1$ depending only on $N^{n+1}$, ${\rm Vol}(M^n)$ and  ${\rm Index}(M^n)$  such that
$$
\sup_{p\in M^n}S(p)\leq C_1.
$$
\end{lem}



 \begin{lem}[Ge-Li \cite{Ge Li 2020 A lower bound second fundamental form}]\label{Lemma Ge-Li Predomo conjecture}
 Let $M^n$ be a closed embedded, non-totally geodesic, minimal hypersurface in $\mathbb{S}^{n+1}$.
 Then there is  a positive constant
  $C_2$, depending only on $n$,
  such that
 $$\int_{M}S \geq C_2{\rm Vol}(M^n).$$
 \end{lem}

\begin{proof}[\textbf{Proof of Corollary $\mathbf{\ref{cor n in 2-6 Volume estimation of minimal hypersurface for nonconstant S}}$}]
By Lemmas \ref{lemma  Sharp-Smax} - \ref{Lemma Ge-Li Predomo conjecture} and Theorem \ref{thm introduction Volume estimation of minimal hypersurface for nonconstant S} (i), we have
\begin{equation*}
\inf_{a\in \mathbb{S}^{n+1}}\int_{M} \varphi^2_a
\geq
  \frac{\int_{M}S}
{2n\sup_{p\in M^n}S(p)}\geq \frac{C_2}{2nC_1}{\rm Vol}(M^n).
\end{equation*}
In particular,
by the Gauss equation and the Gauss-Bonnet theorem, for genus $g$ minimal surface $M^2\subset\mathbb{S}^3$, we have $\chi=2-2g$ and
 $$\int_{M}S=8\pi\left( g-1\right) +2{\rm Vol}(M^2).$$
 If $g=0$, Calabi \cite{Calabi 1967} proved that \emph{if ${S}^{2}$ is minimally immersed in $\mathbb{S}^{3}$, then ${S}^{2}$ is an equator (i.e,  totally geodesic).}
 If $g=1$, Brendle \cite{Brendle S 2013} verified Lawson's Conjecture, i.e.,
  \emph{the only embedded minimal torus in $\mathbb{S}^3$ is the Clifford torus.}
 For $g\geq2$, by Lemma  \ref{lemma Choi-Schoen-Smax} and Theorem \ref{thm introduction Volume estimation of minimal hypersurface for nonconstant S} (i), one has
$$
\inf_{a\in \mathbb{S}^{3}}\int_{M} \varphi^2_a
\geq
 \frac{4\pi\left( g-1\right) +{\rm Vol}(M^2)}{2C_E}\geq  \frac{4\pi+{\rm Vol}(M^2)}{2C_E}
 .$$
In conclusion, for surface case, there is a positive constant $C>0$ depending only on the Euler characteristic $\chi$ of $M^2$ such that
  \begin{equation*}
\inf_{a\in \mathbb{S}^{n+1}}\int_{M} \varphi^2_a
\geq C{\rm Vol}(M^2).
\end{equation*}
\end{proof}

To conclude this section, we give the uniform bounds of the $L^2$ squared norm of $\varphi_a$ on minimal isoparametric hypersurfaces, which give another proof of the result of Corollary \ref{cor-f3-isop}: minimal isoparametric hypersurfaces with $g\geq3$ are IE hypersurfaces.
\begin{thm}\label{thm Volume estimation of minimal isoparametric hypersurfaces}
Let $M^n$ be a minimal isoparametric hypersurface   with $g\geq2$ distinct principal curvatures in  $\mathbb{S}^{n+1}$.
\begin{itemize}
\item[(i)] For $g=2$, i.e.,  $M^n=S^{k}(\sqrt{\frac{k}{n}})\times S^{n-k}(\sqrt{\frac{n-k}{n}})$, $(1\leq k\leq [\frac{n}{2}])$,  we have
$$
\inf_{a\in \mathbb{S}^{n+1}}
\frac {\int_{M} \varphi^2_a}
{  {\rm Vol }\left( M^n \right)  }
= \frac{k}{n(k+1)}, \quad
\sup_{a\in \mathbb{S}^{n+1}}
\frac {\int_{M} \varphi^2_a}
{  {\rm Vol }\left( M^n \right)  }
= \frac{n-k}{n(n-k+1)};
$$
\item[(ii)] For $g\geq3$, we have
$$
\int_{M} \varphi^2_a=\int_{M} \psi^2_a=\frac{1}{n+2}{\rm Vol}(M^n),
$$
 for  all  $a\in \mathbb{S}^{n+1}$, and thus $M^n$ is an IE minimal CSC hypersurface.
\end{itemize}
\end{thm}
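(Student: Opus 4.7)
My plan for Theorem \ref{thm Volume estimation of minimal isoparametric hypersurfaces} is to split the two cases cleanly: a direct product-sphere calculation for $g=2$ in (i), and a short spectral argument reducing (ii) to the IE characterization already established in Theorem \ref{thm-IE-hypers-charact}.

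For part (i), I would parametrize $a=(a_1,a_2)\in \mathbb{R}^{k+1}\oplus\mathbb{R}^{n-k+1}$ with $|a_1|^2+|a_2|^2=1$, and correspondingly $x=(x_1,x_2)\in S^{k}(\sqrt{k/n})\times S^{n-k}(\sqrt{(n-k)/n})$. Expanding
$$\varphi_a(x)^2=\langle a_1,x_1\rangle^2+2\langle a_1,x_1\rangle\langle a_2,x_2\rangle+\langle a_2,x_2\rangle^2,$$
the cross term vanishes after integration because by Fubini it factors as a product of two first-moment integrals on spheres centered at the origin. The two diagonal contributions factor similarly, and applying the standard second-moment identity $\int_{S^{m}(r)}\langle b,y\rangle^{2}=\frac{r^{2}|b|^{2}}{m+1}\,{\rm Vol}(S^{m}(r))$ on each factor yields
$$\frac{\int_{M}\varphi_a^2}{{\rm Vol}(M^n)}=\frac{k\,|a_1|^2}{n(k+1)}+\frac{(n-k)\,|a_2|^2}{n(n-k+1)}.$$
Since $1\leq k\leq[n/2]$ gives $\frac{k}{n(k+1)}\leq\frac{n-k}{n(n-k+1)}$, the constrained extremes on $|a_1|^2+|a_2|^2=1$ occur at $|a_1|=1$ (infimum) and $|a_2|=1$ (supremum), producing exactly the stated values.

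For part (ii), I would exploit the spectral identities from Proposition \ref{prop funda}. For a minimal isoparametric hypersurface with $g\geq3$ principal curvatures, $S=\|A\|^{2}=(g-1)n>n$ is constant, so $\Delta\varphi_a=-n\varphi_a$ and $\Delta\psi_a=-S\psi_a$; being eigenfunctions of $-\Delta$ for \emph{distinct} eigenvalues, $\varphi_a$ and $\psi_a$ are automatically $L^{2}$-orthogonal on $M^n$. All higher mean curvatures of an isoparametric hypersurface being constant, $f_{3}={\rm Tr}(A^{3})$ is a constant, hence
$$\int_{M}\varphi_a\psi_a f_{3}=f_{3}\int_{M}\varphi_a\psi_a=0,$$
which is exactly condition (iv) of case (B) in Theorem \ref{thm-IE-hypers-charact}. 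Therefore $M^n$ is IE, and invoking conditions (i) and (ii) of the same case yields $\int_{M}\varphi_a^2=\int_{M}\psi_a^2=\frac{1}{n+2}{\rm Vol}(M^n)$ for every $a\in\mathbb{S}^{n+1}$.

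The only genuinely computational step is the second-moment/optimization argument in part (i); part (ii) is essentially a two-line reduction once the spectral orthogonality of $\varphi_a$ and $\psi_a$ (forced by $S\neq n$) is noticed. I do not anticipate a substantive obstacle, and indeed the value of the theorem is precisely that the Cheng-Yau-based IE criterion bypasses the more involved Simons-type calculations, giving the alternative proof of Corollary \ref{cor-f3-isop} advertised in the lead-in.
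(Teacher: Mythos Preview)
Your proof of part (i) is essentially identical to the paper's: same splitting $a=(a_1,a_2)$, same Fubini reduction, same second-moment identity on each sphere factor, same linear optimization in $|a_1|^2$.

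For part (ii), your argument is correct but it is not the paper's, and in fact it undercuts the narrative purpose of the theorem. Your proof---spectral orthogonality of $\varphi_a,\psi_a$ (from $S=(g-1)n\neq n$), constancy of $f_3$, then condition (iv) of Theorem~\ref{thm-IE-hypers-charact}(B)---is verbatim the proof already given for Corollary~\ref{cor-f3-isop}. But the lead-in to Theorem~\ref{thm Volume estimation of minimal isoparametric hypersurfaces} announces it as \emph{another} proof of that corollary, i.e., one that does \emph{not} pass through the Cheng--Yau criterion (iv). The paper accordingly gives two genuinely different arguments: for $g\geq4$ it observes that the Gauss map $\nu:M\to M$ is a diffeomorphism with $|\det d\nu|=1$, so $\int_M\varphi_a^2=\int_M\psi_a^2$ directly and one applies condition (iii) of Theorem~\ref{thm-IE-hypers-charact}(B); for general $g\geq3$ it integrates $\varphi_a^2$ over the whole isoparametric foliation of $\mathbb{S}^{n+1}$ via the parallel translations $\phi_\theta$, compares with $\int_{\mathbb{S}^{n+1}}\varphi_a^2=\frac{1}{n+2}{\rm Vol}(\mathbb{S}^{n+1})$, and extracts that $\int_{M_{\theta_0}}\varphi_a^2$ is independent of $a$, whence the value $\frac{1}{n+2}{\rm Vol}(M)$ follows from the inequality in Theorem~\ref{thm main Volume estimation of minimal hypersurface}. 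Your route is shorter, but the paper's route is the point: it gives an extrinsic/foliation proof that is logically independent of the Cheng--Yau operator and of condition (iv). Your closing sentence has the logic reversed---the advertised ``alternative'' is the foliation argument, not the Cheng--Yau one.
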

\begin{rem}
In fact, for $g=4$, on each isoparametric hypersurface (not only minimal) we have $\int_{M} \varphi^2_a=\int_{M} \psi^2_a$.
\end{rem}

\begin{proof}
Case $({\rm i})$.
  Let $a=(a_1,a_2)\in \mathbb{S}^{n+1}$ and $x=(x_1,x_2)\in S^{k}(\sqrt{\frac{k}{n}})\times S^{n-k}(\sqrt{\frac{n-k}{n}})$, where $a_1\in \mathbb{R}^{k+1}$ and $a_2\in \mathbb{R}^{n+1-k}$. Then we have
$$
\begin{aligned}
&\ \ \ \ \int_{M}\varphi^2_a=\int_{M}\left\langle a,x    \right\rangle^2=
\int_{M}\left\langle a_1,x_1    \right\rangle^2+\left\langle a_2,x_2    \right\rangle^2\\
&= {\rm Vol }\left( S^{n-k}(\sqrt{\frac{n-k}{n}}) \right)
\int_{S^{k}(\sqrt{\frac{k}{n}})}\left\langle a_1,x_1    \right\rangle^2+
{\rm Vol }\left( S^{k}(\sqrt{\frac{k}{n}}) \right)
\int_{S^{n-k}(\sqrt{\frac{n-k}{n}})}\left\langle a_2,x_2    \right\rangle^2\\
&=\left(\frac{k}{n(k+1)}\|a_1\|^2+ \frac{n-k}{n(n-k+1)}\|a_2\|^2\right)  {\rm Vol }\left(M^n\right),
\end{aligned}
$$
which gives the uniform bounds immediately.

Case $({\rm ii})$. 
For  $g\geq4$, we observe that $\nu: M^n\hookrightarrow M^n$ is a diffeomorphism (if $M^n$ is minimal when $g=6$) and $|\det d \nu|=1$, thus $\int_{M}\varphi^2_a=\int_{M}\psi^2_a$ by diffeomorphism invariance of integration. This shows that $M^n$ is IE if $M^n$ is minimal, by the subcase $({\rm iii})$ of case $(B)$ in Theorem \ref{thm-IE-hypers-charact} (other than $({\rm iv})$ as in the proof of Corollary \ref{cor-f3-isop}), or by the case $({\rm ii})$ of Theorem \ref{thm main Volume estimation of minimal hypersurface} since now $\int_{M}\varphi^2_a\equiv{\rm Vol}(M^n)/(n+2)$ by (\ref{IEleft-minimal}).

In general, for $g\geq3$ minimal isoparametric hypersurfaces in $\mathbb{S}^{n+1}$, we give one more proof by using the isoparametric theory and the integral inequality (\ref{equation  main Volume estimation of minimal hypersurface}) of Theorem \ref{thm main Volume estimation of minimal hypersurface}. Recall (cf. \cite{CR15}, \cite{Ge19}) that now $M^n=M_{\theta_0}=f^{-1}(c_0)$ is the minimal level hypersurface of the Cartan-M\"{u}nzner isoparametric function $f$ on $\mathbb{S}^{n+1}$, where $c_0=\frac{m_--m_+}{m_-+m_+}=\cos(g\theta_0)$, $0<\theta_0<\frac{\pi}{g}$, and $f(p)=\cos(g\theta(p))$ with $\theta(p)$ the distance of $p\in\mathbb{S}^{n+1}$ to $M_+$ (one of the two focal submanifolds $M_{\pm}:=f^{-1}(\pm1)$ with codimensions $m_{\pm}+1$). Moreover, the parallel level sets $M_\theta:=f^{-1}(\cos(g\theta))$, $\theta\in[0,\frac{\pi}{g}]$ (with $M_0=M_+, M_{\frac{\pi}{g}}=M_-$), constitute a singular Riemannian foliation of $\mathbb{S}^{n+1}$. Hence we have
\begin{equation}\label{equation Mtheta 0gpi}
\int_{0}^{\frac{\pi}{g}}
\int_{ M_{\theta}}\varphi_a^2=
\int_{x\in\mathbb{S}^{n+1}}\varphi_a^2(x)=\frac{1}{n+2}{\rm Vol}\left(\mathbb{S}^{n+1} \right).
\end{equation}
For $\theta\in(0,\frac{\pi}{g})$, the following spherical parallel translation is a diffeomorphism:
$$\begin{aligned}
\phi_{\theta}: M_{\theta_0}&\longrightarrow M_{\theta}\\
x&\longmapsto \cos(\theta_0-\theta)x+
 \sin(\theta_0-\theta)\nu.
\end{aligned}$$
It follows that (cf. \cite{CR15})
$$\phi_{\theta}^*(d{\rm Vol}_{M_{\theta}})=h(\theta)d{\rm Vol}_{M_{\theta_0}},$$
where $h(\theta) =\prod_{i=1}^n\left(\cos(\theta_0-\theta)-\sin(\theta_0-\theta)\lambda_i \right)$, and $\{\lambda_i\}_{i=1}^n$ are the constant principal curvatures of $M_{\theta_0}$ with the $g$ distinct values $\{\cot(\theta_0+\frac{(j-1)\pi}{g})\}_{j=1}^g$ of multiplicities $m_+$ and $m_-$ alternately.

Since $g\geq3$, $S=n(g-1)$ and by Proposition \ref{prop funda} on $M_{\theta_0}$,
\begin{equation*}
\Delta\varphi_a=-n\varphi_a,\quad
\Delta\psi_a=-n(g-1)\psi_{a},
\end{equation*}
we have
$\int_{M_{\theta_0}}\psi_a
\varphi_a=0$. Therefore
\begin{equation}\label{equation Mtheta htheta}
\begin{aligned}
\int_{M_{\theta}}\varphi_a^2
&=\int_{M_{\theta_0}}\left\langle \cos(\theta_0-\theta)x+
 \sin(\theta_0-\theta)\nu,a \right\rangle^2 |h(\theta)|\\
 &=\int_{M_{\theta_0}}\left(\cos^2(\theta_0-\theta)\varphi_a^2+\sin^2(\theta_0-\theta)\psi_a^2 +\sin2(\theta_0-\theta)\varphi_a\psi_a\right) |h(\theta)|\\
  &=\int_{M_{\theta_0}}\left(\cos^2(\theta_0-\theta)\varphi_a^2+\sin^2(\theta_0-\theta)\psi_a^2 \right) |h(\theta)|\\
  &=\int_{M_{\theta_0}}\left(\Big(1-(n+2)\sin^2(\theta_0-\theta)\Big)\varphi_a^2+\sin^2(\theta_0-\theta) \right) |h(\theta)|,
 \end{aligned}
\end{equation}
where the last equality follows from (\ref{IEleft-minimal}). By (\ref{equation Mtheta 0gpi}) and (\ref{equation Mtheta htheta}), we have
\begin{equation}\label{eq-L2norm-phi}
\Big(\beta-(n+2)\alpha\Big)\int_{M_{\theta_0}}\varphi_a^2+\alpha {\rm Vol}(M_{\theta_0})=\frac{1}{n+2}{\rm Vol}\left(\mathbb{S}^{n+1} \right),
\end{equation}
where $\alpha=\int_0^{\frac{\pi}{g}}\sin^2(\theta_0-\theta)|h(\theta)|d\theta$, and $\beta=\int_0^{\frac{\pi}{g}}|h(\theta)|d\theta$.  Analogous to (\ref{equation Mtheta 0gpi}), we have
$$\beta{\rm Vol}(M_{\theta_0})={\rm Vol}\left(\mathbb{S}^{n+1}\right).$$
It follows from (\ref{eq-L2norm-phi}) that either $\beta-(n+2)\alpha=0$ or $\int_{M_{\theta_0}}\varphi_a^2={\rm Vol}(M_{\theta_0})/(n+2)$.
So we are left with proving that the former equality is impossible for $g\geq3$.
 Here we take the case $g=3$ and $m_{\pm}=1$ for example and leave the other cases to the reader.
Now $\theta_0=\frac{\pi}{6}$, $\lambda_1=\cot\frac{\pi}{6}=\sqrt{3}$, $\lambda_2=\cot\frac{\pi}{2}=0$, $\lambda_3=\cot\frac{5\pi}{6}=-\sqrt{3}$, and thus
$$\beta-(n+2)\alpha=\int_{-\frac{\pi}{6}}^{\frac{\pi}{6}}(1-5\sin^2t)(\cos^2t-3\sin^2t)\cos tdt=\frac{1}{2}.$$
\end{proof}

\section {Applications to spherical Bernstein theorems}

In this section, we apply the integral inequalities of Theorems \ref{thm main Volume estimation of minimal hypersurface} and \ref{thm introduction Volume estimation of minimal hypersurface for nonconstant S} to  prove Theorem \ref{thm  applitions of intergal formula} for spherical zone domains. Firstly we recall the following classical spherical Bernstein theorem for hemispheres.

\begin{prop}\label{thm 1spherical cap} 
Let $ M^n$ be a closed minimal hypersurface lying in a closed hemisphere $\mathbb{S}_+^{n+1}:=\{x\in\mathbb{S}^{n+1}: \varphi_a(x)\geq0\}$. Then $M^n$ is an equator.
\end{prop}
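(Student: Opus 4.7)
The strategy is a one-line integration argument based on Proposition~\ref{prop funda}. Since $M^n$ is minimal, $H\equiv 0$ and Proposition~\ref{prop funda} gives
\[
\Delta\varphi_a=-n\varphi_a+nH\psi_a=-n\varphi_a.
\]
So $\varphi_a$ is an eigenfunction of the Laplacian with eigenvalue $n$.

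Next I would integrate both sides over the closed manifold $M^n$. By the divergence theorem (or equivalently Stokes' theorem), $\int_M\Delta\varphi_a=0$, whence
\[
\int_M\varphi_a=0.
\]
Combined with the hypothesis $\varphi_a\geq 0$ on $M$, this forces $\varphi_a\equiv 0$ on $M$. Therefore $x(M)$ is contained in the equator $E_a:=\{y\in\mathbb{S}^{n+1}:\langle y,a\rangle=0\}$, which is itself a totally geodesic $n$-dimensional hypersphere.

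To finish I would argue by a dimension-and-closedness consideration: the immersion $x:M^n\looparrowright E_a$ between manifolds of the same dimension is a local diffeomorphism, hence open; since $M^n$ is compact, $x(M)$ is closed in the connected manifold $E_a$; consequently $x(M)=E_a$ and $M^n$ is (a covering of) the equator, i.e.\ the equator itself under the standing connectedness convention. The entire proof is quite short; no serious obstacle is anticipated, the only mild subtlety being the last topological step that upgrades ``contained in the equator'' to ``equals the equator''.
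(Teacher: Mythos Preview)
Your proof is correct and follows essentially the same approach as the paper: both use $\Delta\varphi_a=-n\varphi_a$ from Proposition~\ref{prop funda}, integrate over the closed $M$, and combine with $\varphi_a\geq 0$ to force $\varphi_a\equiv 0$. Your final topological step spelling out why ``$M\subset E_a$'' implies ``$M=E_a$'' is in fact more detailed than the paper's proof, which simply concludes that $M^n$ is totally geodesic.
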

\begin{proof}
Since $$\Delta\varphi_a=-n\varphi_a,$$
if $\varphi_a\geq 0$ for some $a\in \mathbb{S}^{n+1}$, it implies that
$$\Delta\varphi_a\leq0.$$
But $$\int_{M}\Delta\varphi_a=0,$$
one has $\Delta\varphi_a\equiv 0=\varphi_a$ and thus $ M^n$ is totally geodesic.
\end{proof}

\begin{proof}[\textbf{Proof of Theorem $\mathbf{\ref{thm  applitions of intergal formula}}$}]
Case $({\rm i})$.
If $M^n$ lies in some spherical zone $\mathbb{S}_{zone}^{n+1}
(\sqrt{\frac{1}{2n}})$  completely,  then there is some  $a_0\in \mathbb{S}^{n}$ such that
$$|\varphi_{a_0}(x)|=|\left\langle x,a_0 \right\rangle| <\sqrt{\frac{1}{2n}},  $$
for all $x\in M^n$.
Then it follows from Theorem \ref{thm main Volume estimation of minimal hypersurface}  the following contradiction
$$
\frac{1}{2n}\leq
\inf_{a\in \mathbb{S}^{n+1}}
\frac {\int_{M} \varphi^2_a}
{  {\rm Vol }\left( M^n \right)  }
<\frac{1}{2n}.
$$
Similarly, for IE minimal CSC hypersurfaces with $|\varphi_{a_0}(x)|<\sqrt{\frac{1}{n+2}}$, we have
$$
\inf_{a\in \mathbb{S}^{n+1}}
\frac {\int_{M} \varphi^2_a}
{  {\rm Vol }\left( M^n \right)  }
<\frac{1}{n+2},
$$
which shows that $M^n$ is totally geodesic by Theorem \ref{thm main Volume estimation of minimal hypersurface}.

Applying Theorem \ref{thm introduction Volume estimation of minimal hypersurface for nonconstant S} and Corollary \ref{cor n in 2-6 Volume estimation of minimal hypersurface for nonconstant S}, cases $({\rm ii})$ and $({\rm iii})$ can be proven similarly as for case $({\rm i})$.
\end{proof}

\begin{acknow}
The authors thank the anonymous referee for their valuable suggestions.
The authors would like to Dr. Qichao Li for his valuable discussions about Takahashi's theorem. Finally, the authors want to  thank Professor Xin Zhou for his useful discussions about minimal surfaces in $3$-sphere.
\end{acknow}


\end{document}